\newtheorem{theorem}{Theorem}[section]
\theoremstyle{definition}
\def\no{|\!|}
\newcommand{\cB}{\mathcal B}
\newcommand{\cE}{\mathcal E}
\newcommand{\cH}{\mathcal H}
\newcommand{\R}{\mathbb R}
\newcommand{\C}{\mathbb C}
\newcommand{\N}{\mathbb N}
\newcommand{\st}{\,:\,}
\newcommand{\EE}{\mathbb E}
\newcommand{\ra}{\rightarrow}
\title[Normal stability]
     {On normal stability for nonlinear parabolic equations}
\author[Jan Pr\"uss, Gieri Simonett and Rico Zacher]{}
\subjclass{Primary: 35K55, 35B35, 34G20; Secondary: 37D10, 35R35}
 \keywords{Convergence towards equilibria, normally stable, 
generalized principle of linearized stability,
center manifolds, fully nonlinear parabolic equations}
 \email{jan.pruess@mathematik.uni-halle.de}
 \email{gieri.simonett@vanderbilt.edu}
 \email{rico.zacher@mathematik.uni-halle.de}
\thanks{The second author is partially supported by NSF grant DMS-0600870.
The third author was partially supported by the Deutsche Forschungsgemeinschaft (DFG)}
\begin{document}
\maketitle

\centerline{\scshape Jan Pr\"uss }
\medskip
{\footnotesize
 \centerline{Institut f\"ur Mathematik,
             Martin-Luther-Universit\"at Halle-Witten\-berg}
   \centerline{D-06120 Halle, Germany}
} 

\medskip

\centerline{\scshape Gieri Simonett }
\medskip
{\footnotesize
 \centerline{Department of Mathematics, Vanderbilt University}
   \centerline{Nashville, TN 37240, USA}
} 

\medskip

\centerline{\scshape Rico Zacher}
\medskip
{\footnotesize
 \centerline{Institut f\"ur Mathematik, 
             Martin-Luther-Universit\"at Halle-Witten\-berg}
   \centerline{D-06120 Halle, Germany}
} %

\bigskip


\begin{abstract}
We show convergence of solutions to equilibria for 
quasilinear and fully nonlinear
parabolic evolution equations in situations where the set of
equilibria is non-discrete, but forms a finite-dimensional
$C^1$-manifold which is normally stable. 

\end{abstract}
\section{Introduction}
In this short note we consider quasilinear as well as fully 
nonlinear parabolic equations
and we study convergence of solutions towards equilibria
in situations where the set of equilibria forms a
$C^1$-manifold.
\smallskip\\
Our main result can be summarized as follows:
suppose that for a nonlinear evolution equation we
have a $C^1$-{\em manifold of equilibria} $\cE$ such that at a
point $u_*\in\cE$, the kernel $N(A)$ of the linearization $A$ is
isomorphic to the tangent space of $\cE$ at $u_*$, the eigenvalue
$0$ of $A$ is semi-simple, and the remaining spectral part of the
linearization $A$ is stable. Then solutions starting nearby $u_*$
exist globally and converge to some point on $\cE$.
This situation occurs frequently in
applications. We call it the {\em generalized principle of
linearized stability}, and the equilibrium $u_*$ is then termed
{\em normally stable}.

A typical example for this situation to occur is the
case where the equations under consideration involve symmetries,
i.e.\ are invariant under the action of a Lie-group. 

The situation where the set of equlibria forms a $C^1$-manifold
occurs for instance in phase transitions \cite{ES98,PrSi06},
geometric evolution equations \cite{EMS98,ES99},
free boundary problems in fluid dynamics \cite{FR02, GP97},
stability of traveling waves \cite{PSZ08},
and models of tumor growth, to mention just a few.

A standard method to handle situations as described above is to
refer to {\em center manifold theory}. In fact, in that situation
the center manifold of the problem in question will be unique, and
it coincides with $\cE$ near $u_*$. Thus the so-called {\em
shadowing lemma} in center manifold theory implies the result.
Center manifolds are well-studied objects in the theory of nonlinear
evolution equations. For the parabolic case we refer to the
monographs \cite{Hen81,Lun95}, and to 
the publications 
\cite{BaJo89, BJL00, DPLu88,LPS08,Mie91,Sim95,VI92}.

However, the theory of center manifolds is a technically difficult matter.  Therefore it seems desirable to have a simpler, direct approach to the generalized principle of linearized stability which avoids the technicalities of center manifold theory.

Such an approach has been introduced in \cite{PSZ08}
in the framework of $L_p$-maximal regularity. 
It turns
out that within this approach the effort to prove convergence
towards equilibria in the normally stable case 
is only slightly larger than that for the proof
of the standard linearized stability result - which is simple.

The purpose of this paper is to extend the approach given in
\cite{PSZ08} to cover a broader setting and a broader class of 
nonlinear parabolic equations, including fully nonlinear equations. 
This approach is flexible and general enough to reproduce the
results contained in \cite{Cui07,EMS98,ES98,ES99,FR02,GP97,PrSi06,PSZ08},
and it will have applications to many other problems.

Our approach makes use of the concept of maximal regularity in an essential way. As general references for this theory we refer to the monographs
\cite{Ama95,DHP03,Lun95}.
\section{Abstract  nonlinear problems in a general setting}
\noindent Let $X_0$ and $X_1$ be Banach spaces, 
and suppose that $X_1$ is densely embedded in $X_0$.
Suppose that $F:U_1\subset X_1\to X_0$ satisfies
\begin{equation}
\label{F}
F\in C^k(U_1,X_0),\quad k\in \N,\ k\ge 1,
\end{equation}
where $U_1$ is an open subset of $X_1$.
Then we consider the autonomous (fully) nonlinear problem
\begin{equation} 
\label{FN1}
\dot{u}(t)+F(u(t))=0,\quad t>0, \quad u(0)=u_0,
\end{equation}
for $u_0\in U_1$.
In the sequel we use the notation $|\cdot|_j$ to denote the norm
in the respective spaces $X_j$ for $j=0,1$.
Moreover, for any normed space $X$,
$B_X(u,r)$ denotes the open ball in $X$ with radius $r>0$ around $u\in X$.
\smallskip\\
\noindent
Let $ \cE\subset U_1$ denote the set of  equilibrium solutions 
of \eqref{FN1}, which means that
$$
u_\ast\in\cE \quad \mbox{ if and only if }\quad F(u_\ast)=0.
$$
Given an  element $u_*\in\cE$, we assume that $u_*$ is
contained in an $m$-dimensional manifold of equilibria. This means that there
is an open subset $U\subset\R^m$, $0\in U$, and a $C^1$-function
$\Psi:U\rightarrow X_1$ such that
\begin{equation}
\label{manifold}
\begin{aligned}
& \bullet\
\text{$\Psi(U)\subset \cE$ and $\Psi(0)=u_*$,} \\
& \bullet\
 \text{the rank of $\Psi^\prime(0)$ equals $m$, and} \\
& \bullet\
\text{$F(\Psi(\zeta))=0,\quad \zeta\in U.$}
\end{aligned}
\end{equation}
We assume further that near $u_*$ there are no other equilibria
than those given by $\Psi(U)$,
i.e.\ $\cE\cap B_{X_1}(u_*,{r_1})=\Psi(U)$, for some $r_1>0$.
\smallskip\\
\noindent
Let $u_\ast\in\cE$ be given and set $A:=F^\prime (u_\ast)$.
Then we assume that $A\in\cH(X_1,X_0)$, 
by which we mean that $-A$, considered as a linear
operator in $X_0$ with domain $X_1$, generates a strongly continuous
analytic semigroup $\{e^{-At};\,t\ge 0\}$ on $X_0$. In particular we
may take the graph norm of $A$ as the norm in $X_1$.

For the deviation $v:=u-u_*$ from $u_*$, equation \eqref{FN1}
can be restated as 
\begin{equation} 
\label{FN2}
\dot{v}(t)+Av(t)=G(v(t)),\quad t>0, \quad v(0)=v_0,
\end{equation}
where $v_0=u_0-u_*$, and $G(z):=Az-F(z+u_*)$,
$z\in V_1:=U_1-u_\ast$.
It follows from \eqref{F}
that
$G\in C^k(V_1,X_0)$.
Moreover, we have
$G(0)=0$ and $G^\prime(0)=0$.
Setting
$\psi(\zeta)=\Psi(\zeta)-u_*$ results in the following equilibrium
equation for problem \eqref{FN2}
\begin{equation}
\label{equilibrium-psi}
A\psi(\zeta)=G(\psi(\zeta)),\quad \mbox{ for all }\;\zeta\in U.
\end{equation}
Taking the derivative with respect to $\zeta$ and using
the fact that $G^\prime(0)=0$ we conclude that
$A\psi^\prime(0)=0$ and this implies that
the tangent space of $\cE$ at
$u_\ast$ is contained in $N(A)$, the kernel of $A$.

For $J=[0,a)$, $a\in (0,\infty]$, we consider a pair of Banach
spaces $(\EE_0(J),\EE_1(J))$ such that $\EE_0(J)\hookrightarrow
L_{1,{\rm loc}}(J;X_0)$ and
\[
\EE_1(J)\hookrightarrow H^1_{1,{\rm loc}}(J;X_0)\cap L_{1,{\rm loc}}(J;X_1),
\]
respectively.
Denoting by $X_\gamma=\gamma\EE_1$ the trace space of $\EE_1(J)$ 
we assume that
\begin{itemize}
\item[(A1)]
$\gamma\EE_1$ is independent of $J$, and the embedding
$\EE_1(J)\hookrightarrow BU\!C(J;X_\gamma)$ holds. 
In addition, we assume that
 there is a constant $c_0> 0$ independent of $J=[0,a)$, 
$a\in (0,\infty]$, such that
\begin{equation}
\label{trace-0}
\sup_{t\in J} \no w(t)\no_{\gamma}\le c_0\no w\no_{\EE_1(J)},\quad
\mbox{for all}\; w\in \EE_1(J),\; w(0)=0.
\end{equation}
\end{itemize}
We refer to \cite[Section III.1.4]{Ama95} for further information
on trace spaces.
Moreover, we assume that
\begin{itemize}
\item[(A2)]
 $\tilde{w}\in \EE_1(J)$ and
$|w(t)|_0\le|\tilde{w}(t)|_1$, $t\in J$, 
imply $\no w\no_{\EE_0(J)}\le \no\tilde{w}\no_{\EE_1(J)}$; 
for $\omega>0$ fixed, there exists a constant $c_1>0$
not depending on $J$ and such that
\begin{equation} 
\label{FNR}
\begin{split}
&\int_J e^{-\omega s}|w(s)|_1\,ds
\le c_1\no w\no_{\EE_1(J)},\quad
\mbox{for all}\;w\in \EE_1(J),\\
&\int_t^\infty e^{-\omega s}|w(s)|_1\,ds
\le c_1 e^{-\omega t}\no w\no_{\EE_1(\R_+)},\quad
\mbox{for all}\;w\in \EE_1(\R_+)\text{ and } t\ge 0.
\end{split}
\end{equation}
\end{itemize}
Our {\em key assumption} is that
$(\EE_0(J),\EE_1(J))$ is a pair of maximal regularity for $A$.
To be more precise we assume that
\begin{itemize}
\item[(A3)]
the linear Cauchy problem
$\dot w +Aw=g,\ w(0)=w_0 $
has for each $(g,w_0)\in \EE_0(I)\times \gamma\EE_1(I)$
a unique solution $w\in\EE_1(I)$, where 
$I=[0,T]$ is a finite interval.
\end{itemize}
We impose the following assumption for the sake of convenience.
For all examples that we have in mind the condition
can be derived from (A3).

Suppose that $\sigma(A)$, the spectrum of $A$, admits
a decomposition
$\sigma(A)=\sigma_s\cup \sigma^\prime $, where
$\sigma_s\subset\{z\in\C:{\rm Re}\, z>\omega\}$
for some $\omega>0$ and 
$\sigma^\prime\subset\{z\in\C:{\rm Re}\, z\le 0\}$.
Let $P_s$ denote the spectral projection 
corresponding to the spectral set $\sigma_s$.
Then we assume that
\begin{itemize}
\item[(A4)] there exists a constant $M_0>0$ such that for any $J=[0,a)$, $a\in (0,\infty]$,
any $\sigma\in [0,\omega]$, and any function $g$ with $e^{\sigma
t}P_s g\in \EE_0(J)$ there is a unique solution $w$
 of $\dot{w}+A_s w=P_s g$, $t\in J$, $w(0)=0$,
satisfying
\[
\no e^{\sigma t}w\no_{\EE_1(J)}\le M_0\no e^{\sigma t}P_s g\no_{\EE_0(J)};
\]
there exists a constant $M_1>0$ such that for
any $J=[0,a)$, $a\in (0,\infty]$, and for any $z\in X_\gamma$
there holds
\[
\no e^{\sigma t}e^{-A_s t}P_s z\no_{\EE_1(J)}
+\sup_{t\in J}|e^{\sigma t}e^{-A_s t}P_s z|_\gamma
\le M_1 |P_s z|_{\gamma},\quad \sigma\in [0,\omega].
\]
\end{itemize}
We again refer to \cite[Chapter III]{Ama95}
for more background information on the notion of 
maximal regularity.
In order to cover the case $X_\gamma\neq X_1$ we assume
the following {\em structure condition} on the nonlinearity $G$:
\begin{itemize}
\item[(A5)] there exists a uniform constant $C_1$ such that for any 
$\eta>0$ there is $r>0$ such that
\begin{equation*}
\hspace{1cm}
|G(z_1)-G(z_2)|_0\le C_1(\eta+|z_2|_1)|z_1-z_2|_1,\quad
z_1,\,z_2\in X_1\cap B_{X_\gamma}(0,r).
\end{equation*}
\end{itemize}
Observe that condition (A5) trivially holds in the case
$X_\gamma= X_1$, since $G'(0)=0$. A short computation shows that
condition (A5) is also satisfied if $F$ has a quasilinear 
structure, i.e. if
\begin{equation}
\label{quasilinear}
F(u)=B(u)u+f(u)\quad\text{for $u\in U_\gamma$},\quad
(B,f)\in C^1(U_\gamma,\cB(X_1,X_0)\times X_0),
\end{equation}
where $U_\gamma\subset X_\gamma$ is an open set.
\smallskip\\
Lastly, concerning {\em solvability} of the nonlinear problem (\ref{FN2}) we will assume that
\begin{itemize}
\item[(A6)] given $b>0$ there exists $r_2>0$ such that for any
$v_0\in B_{X_\gamma}(0,r_2)$ problem (\ref{FN2}) admits a
unique solution $v\in \EE_1([0,b])$.
\end{itemize}
Note that since $v=0$ is
an equilibrium of (\ref{FN2}), condition (A6) is satisfied
whenever one has existence and uniqueness of local solutions in the
described class as well as continuous dependence of the maximal time
of existence on the initial data.
\medskip\\
We conclude this section by describing three important examples
of admissible pairs $(\EE_0(J),\EE_1(J))$.
\medskip\\
\noindent
{\bf Example 1:} ($L_p$-maximal regularity.)\\
In our first example, the spaces $(\EE_0(J),\EE_1(J))$
are given by
\begin{equation}
\EE_0(J):=L_p(J;X_0),
\quad
\EE_1(J):=H^1_p(J;X_0)\cap L_p(J;X_1).
\end{equation}
The trace space is a
real interpolation space given by
$\gamma\EE_1=X_\gamma=(X_0,X_1)_{1-1/p,p}$ and we have
$\EE_1(J)\hookrightarrow BU\!C(J;X_\gamma)$,
see for instance \cite[Theorem III.4.10.2]{Ama95}.
For a proof of \eqref{trace-0} we refer to 
\cite[Proposition 6.2]{PSS07}. This yields
Assumption (A1).
For Assumption (A2) we note that
\begin{equation*}
\begin{split}
\int_J e^{-\omega s}|w(s)|_1\,ds
\le c_1\big(\int_J |w(s)|_1^p\,ds\big)^{1/p}
\le c_1 \no w\no_{\EE_1(J)}
\end{split}
\end{equation*}
for all $w\in \EE_1(J)$ by H\"older's inequality. Moreover,
\begin{equation*}
\begin{split}
\int_t^\infty e^{-\omega s}|w(s)|_1\,ds
\le \big(\int_t^\infty e^{-\omega s p^\prime}\,ds\big)^{1/p^\prime}
\big(\int_t^\infty |w(s)|^p\,ds\big)^{1/p^\prime}
\le c_1 e^{-\omega t}\no  w\no_{\EE_1(\R_+)}
\end{split}
\end{equation*}
for $t\ge 0$ and $w\in \EE_1(\R_+)$.
We refer to \cite{DHP03,KW04, PrSi07}, \cite[Section III.4.10]{Ama95}
and the references therein for conditions
guaranteeing that the crucial Assumption (A3)
on maximal regularity is satisfied.
It is clear that the property of maximal regularity
is passed on from $A$ to $A_s$ in the spaces 
$\EE^s_0(J):=L_p(J;X_0^s)$, $\EE^s_1(J):=H^1_p(J;X_0^s)\cap L_p(J;X_1^s)$,
and this implies Assumption (A4), see for instance
\cite[Remark III.4.10.9(a)]{Ama95}.
Assumption (A5) is satisfied in case that
the nonlinear mapping $F$ has a {\em quasilinear structure}, see \cite{PSZ08}.
Assumption (A6) follows in case that $F$ has a quasilinear structure
from (A3) and \cite[Theorem 3.1]{Pru03}, 
see also \cite[Theorem 2.1, Corollary 3.3]{Am05}.
We remark that the case of $L_p$-maximal regularity has been
considered in detail in \cite{PSZ08}.
\medskip
\goodbreak
\noindent
{\bf Example 2:} ({Continuous maximal regularity}).\\
Let $J=[0,a)$ with $0<a\leq\infty$
and set $\dot J:=(0,a)$.
For $\mu\in (0,1)$ and $X$ a Banach space we set
\begin{equation*}
\begin{split}
&BU\!C_{1-\mu}(J;X):=\big\{u\in C(\dot{J};X):[t\mapsto t^{1-\mu}u]\in
BU\!C(\dot{J};X),\\
& \hspace{6cm}\lim_{t\to 0^+} t^{1-\mu}|u(t)|_X=0\big\},\\
&BU\!C_0(J;X):=BU\!C(J;X). \\
\end{split}
\end{equation*}
$BU\!C_{1-\mu}(J;X)$ is turned into a Banach space by the norm
\begin{equation*}
\no u\no_{C_{1-\mu}(J;X)}:=\sup_{t\in \dot{J}} t^{1-\mu}|u(t)|_X,
\quad \mu\in (0,1].
\end{equation*}
Finally, we set 
$ BU\!C_{1-\mu}^1(J;X):=\{u\in C^1(\dot{J};X):u,\,\dot{u}\in
BUC_{1-\mu}(J;X)\}.$
With these preparations we define
\begin{equation}
\label{continuous}
\begin{split}
\EE_0(J):&=BU\!C_{1-\mu}(J;X_0), \\
\EE_1(J):&=BU\!C_{1-\mu}^1(J;X_0)\cap BU\!C_{1-\mu}(J;X_1)
\end{split}
\end{equation}
endowed with the canonical norms.

Supposing that
$\cH(X_1,X_0)\neq\emptyset$ the trace space $\gamma\EE_1$ is the
continuous interpolation space
$\gamma\EE_1=(X_0,X_1)_{\mu,\infty}^0=:D_A(\mu)$, and we have the embedding
$\EE_1(J)\hookrightarrow BU\!C(J;\gamma\EE_1)$, see
\cite[Theorem III.2.3.3]{Ama95}. 
A proof for estimate \eqref{trace-0} can
be found in \cite[Lemma 2.2(c)]{ClSi01},
and this shows that Assumption (A1) is satisfied. 
Assumption~(A2) holds as
\begin{equation*}
\begin{split}
\int_J e^{-\omega s}|w(s)|_1\,ds=
\int_J \frac{e^{-\omega s}}{s^{1-\mu}}s^{1-\mu}|w(s)|_1\,ds
\le c_1 \no w\no_{C_{1-\mu}(J;X_1)}
\le c_1 \no w\no_{\EE_1(J)}
\end{split}
\end{equation*}
for all $w\in \EE_1(J)$, and 
\begin{equation*}
\begin{split}
\int_t^\infty e^{-\omega s}|w(s)|_1\,ds=
\int_t^\infty \frac{e^{-\omega s}}{s^{1-\mu}}s^{1-\mu}|w(s)|_1\,ds
\le c_1 e^{-\omega t}\no  w\no_{\EE_1(\R_+)}
\end{split}
\end{equation*}
for $t\ge 0$ and $w\in \EE_1(\R_+)$.
\medskip\\
It turns out that maximal regularity
cannot hold in the class \eqref{continuous}
if $X_1\neq X_0$ and $X_0$ is reflexive.
On the other side, there is an interesting class
of spaces $(X_0, X_1)$ where Assumption (A3)
is indeed satisfied for the pair
$(\EE_0(J),\EE_1(J))$ given in \eqref{continuous},
see \cite{Ang90, ClSi01, DPrGr79, Lun95}
and \cite[Theorem III.3.4.1]{Ama95}.
$A_s$ inherits the property of maximal regularity from $A$,
and this implies Assumption (A4), see
\cite[Remark III.3.4.2(b)]{Ama95}.
Assumption~(A5) holds
in the case $\mu=1$ for any function $G\in C^1(U_1,X_0)$
with $G(0)=G^\prime(0)=0$.
It  also holds for $\mu\in (0,1)$ if the nonlinear function
$F$ given in \eqref{FN1} satisfies \eqref{quasilinear}.
\smallskip\\
If $\mu=1$ and $k\ge 1$ then it follows from (A3) and 
\cite[Theorem 2.7, Corollary 2.9]{Ang90}, 
see also \cite[Section 8.4]{Lun95},
that Assumption (A6) is satisfied.
\smallskip\\
If $\mu\in (0,1)$, $k\ge 1$ and $F$ has a {\em quasilinear} structure,
see \eqref{quasilinear},
then Assumption (A6) follows from (A3) and \cite[Theorem 5.1]{ClSi01},
see also \cite[Theorem 6.1]{ClSi01}. 
\bigskip\\
\goodbreak
\noindent
{\bf Example 3:} (H\"older maximal regularity.)\\
Suppose $\rho\in (0,1)$,  $I\subset\R_+$, $J\subset\R_+ $
are intervals with $0\in J$.
Then we set 
\begin{equation*}
\begin{split}
[u]_{C^\rho(I;X)}&:=
\sup\Big\{\frac{|u(t)-u(s)|}{|t-s|^\rho}: s,t\in I,\ s\neq t\Big\},\\
[\![u]\!]_{C^\rho_\rho(J;X)}
&:=\sup_{2\varepsilon\in\dot J}    \varepsilon^\rho[u]_{C^\rho([\varepsilon,2\varepsilon];X)},                                                  
\end{split}
\end{equation*}
and
\begin{equation*}
\begin{split}
\no u\no_{C^\rho_\rho(J;X)}&:=\no u\no_{BC(I;X)}
+[\![u]\!]_{C^\rho_\rho(J;X)}, \\
BC^\rho_{\rho}(J;X)&:=\{u\in C^\rho(J;X): 
\no u\no_{C^\rho_\rho(J;X)}<\infty\}. 
\end{split}
\end{equation*}                                                               
Moreover, we set
\begin{equation*}
\begin{split}
BU\!C^\rho_\rho(J;X):=
\{u\in BU\!C(J;X)\cap BC^\rho_\rho(J;X): 
\lim_{\varepsilon\to 0^+} \varepsilon^\rho 
[u]_{C^\rho_\rho([\varepsilon,2\varepsilon];X)}=0\}
\end{split}
\end{equation*}
and equip it with the norm 
$\no \cdot \no_{C^\rho_\rho(J;X)}$.
For the pair $(\EE_0(J),\EE_1(J))$ we take
\begin{equation}
\begin{split}
\label{H1}
&\EE_0(J):=BU\!C^\rho_\rho(J;X_0), \\
&\EE_1(J):=BU\!C^{1+\rho}_\rho(J;X_0)\cap BU\!C^\rho_\rho(J;X_1),
\end{split}
\end{equation}
where 
$BU\!C^{1+\rho}_\rho(J;X)
:=\{u\in BU\!C^\rho_\rho(J;X_0): \dot u\in BU\!C^\rho_\rho(J;X_0)\}$.
The spaces in \eqref{H1} are given their canonical norms,
turning them into Banach spaces.
\smallskip\\
We have $\gamma\EE_1(J)=X_1$ and it is clear from the definition
of (the norm of) $\EE_1(J)$ that 
$\EE_1(J)\hookrightarrow BU\!C(J,X_1)$,
and that \eqref{trace-0} is satisfied
for any $w\in\EE_1(J)$.
This shows that Assumption (A1) holds.
By similar arguments as above we see that
Assumption (A2) is satisfied as well.
\smallskip\\
For the crucial Assumption (A3)
we refer to \cite[Theorem III.2.5.6]{Ama95} with
$\mu=1$; see also \cite[Corollary 4.3.6(ii)]{Lun95}.
It is worthwhile to mention that this  
maximal regularity result is true for {\em any} $A\in\cH(X_1,X_0)$ and any pair $(X_0,X_1)$. 
Assumption (A4) follows then as above, see \cite[Theorem III.2.5.5]{Ama95}.
Assumption (A5) holds for {\em any} function 
$G\in (U_1,X_0)$ with $G(0)=G^\prime(0)=0$.
\smallskip\\
Finally, it follows from Theorem 8.1.1 and Theorem 8.2.3 in
\cite{Lun95} that Assumption (A6)
holds for the fully nonlinear problem \eqref{FN1}
in case that $k\ge 2$.
(In fact, it suffices to require that
the derivative $F^\prime$ of $F$ be locally Lipschitz continuous.)
\section{The main result}        
In this section we state and prove our main theorem about
convergence of solutions for the nonlinear equation
\eqref{FN1} towards equilibria.
\begin{theorem}
\label{th:1}
Let $u_*\in X_1$ be an
equilibrium of (\ref{FN1}), and assume that the above 
conditions (A1)-(A6) are satisfied. 
Suppose that $u_*$ is normally stable, i.e.\ assume that
\begin{itemize}
\item[(i)] near $u_*$ the set of equilibria $\cE$ is a $C^1$-manifold in $X_1$ of dimension $m\in\N$,
\item[(ii)] \, the tangent space for $\cE$ at $u_*$ is given by $N(A)$,
\item[(iii)] \, $0$ is a semi-simple eigenvalue of
$A$, i.e.\ $ N(A)\oplus R(A)=X_0$,
\item[(iv)] \, $\sigma(A)\setminus\{0\}\subset \C_+=\{z\in\C:\, {\rm Re}\, z>\omega\}$ for some $\omega>0$.
\end{itemize}
Then $u_*$ is stable in $X_\gamma$, and there exists $\delta>0$ such
that the unique solution $u(t)$ of \eqref{FN1} with initial
value $u_0\in X_\gamma$ satisfying $|u_0-u_*|_\gamma<\delta$
exists on $\R_+$ and converges at an exponential rate 
to some $u_\infty\in\cE$ in $X_\gamma$ as $t\rightarrow\infty$.
\end{theorem}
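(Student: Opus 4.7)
The plan follows the strategy of \cite{PSZ08}: reduce the equation to a coupled system for a finite-dimensional parameter $\zeta$ moving along $\cE$ and a transverse ``stable'' component $w$ that decays exponentially, and then run a contraction argument in an exponentially weighted norm on $\R_+$. By hypothesis (iii), let $P^c$ be the spectral projection onto $N(A)$ and $P^s=I-P^c$ the projection onto $R(A)$; by (iv), the restriction $A_s:=A|_{R(A)\cap X_1}$ has spectrum in $\{\mathrm{Re}\,z>\omega\}$, so (A4) applies with weights $\sigma\in[0,\omega]$. Since $\psi'(0):\R^m\to N(A)$ is an isomorphism by (i)-(ii), the implicit function theorem yields, for $v$ small in $X_\gamma$, a unique decomposition $v=\psi(\zeta)+w$ with $w\in R(A)$ and $\zeta\in U\subset\R^m$, characterized by $P^c\psi(\zeta)=P^c v$. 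Substituting into \eqref{FN2}, using the equilibrium identity \eqref{equilibrium-psi}, and projecting with $P^c$ (using $P^cA=0$ and $P^c\dot w=0$) and $P^s$ yields the triangular system
$$
\dot\zeta=[P^c\psi'(\zeta)]^{-1}P^cH(\zeta,w),\qquad \dot w+A_sw=P^sH(\zeta,w),
$$
where $H(\zeta,w):=G(\psi(\zeta)+w)-G(\psi(\zeta))$ satisfies $H(0,0)=0$ and, by (A5) together with $|\psi(\zeta)|_1\le C|\zeta|$, the crucial estimate $|H(\zeta,w)|_0\le C_1(\eta+|\zeta|)|w|_1$.

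Next I would run a Banach fixed point with exponential weight $\sigma=\omega$ on the ball
$$
\Sigma_r:=\big\{(\zeta,w)\st \sup_{t\ge0}|\zeta(t)|+\no e^{\omega t}w\no_{\EE_1(\R_+)}\le r\big\},
$$
with initial data $\zeta(0)=\zeta_0$ determined by $P^c\psi(\zeta_0)=P^cv_0$ and $w(0)=v_0-\psi(\zeta_0)\in R(A)\cap X_\gamma$. For $(\zeta,w)\in\Sigma_r$, the new $\widetilde w$ solves the linear problem $\dot{\widetilde w}+A_s\widetilde w=P^sH(\zeta,w)$ via (A3)-(A4), giving
$$
\no e^{\omega t}\widetilde w\no_{\EE_1(\R_+)}\le M_0\no e^{\omega t}P^sH(\zeta,w)\no_{\EE_0(\R_+)}+M_1|w(0)|_\gamma\le C(\eta+r)\no e^{\omega t}w\no_{\EE_1(\R_+)}+M_1|w(0)|_\gamma,
$$
where the second inequality uses (A5) pointwise plus the first bound in (A2) applied to $e^{\omega t}w$. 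The new $\widetilde\zeta$ is obtained by integrating the $\dot\zeta$-equation, and
$$
\int_0^\infty|\dot\zeta(s)|\,ds\le C(\eta+r)\int_0^\infty|w(s)|_1\,ds\le Cc_1(\eta+r)\no e^{\omega t}w\no_{\EE_1(\R_+)},
$$
again by (A2); in particular $\zeta$ remains bounded, $\dot\zeta\in L_1(\R_+)$, and $\zeta(t)\to\zeta_\infty$ exists, with convergence at rate $e^{-\omega t}$ via the second bound in (A2). For $\eta,r$ (and $|v_0|_\gamma$) small enough the map is a contraction on $\Sigma_r$, yielding a unique global fixed point whose size scales with $|v_0|_\gamma$.

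Translating back, $u(t)=u_*+\psi(\zeta(t))+w(t)\to\Psi(\zeta_\infty)\in\cE$ in $X_\gamma$ at rate $e^{-\omega t}$; the decay of $w$ in $X_\gamma$ follows from (A1) applied to $e^{\omega t}w$, and stability in $X_\gamma$ is automatic from the linear dependence of the estimates on $|v_0|_\gamma$. Global existence on $\R_+$ is reconciled with the local solution from (A6) by a standard continuation argument using uniqueness of the decomposition $v=\psi(\zeta)+w$. The main obstacle I anticipate is the interplay between the structure condition (A5) and the weighted spaces: (A5) controls $|H|_0$ by $|w|_1$ only up to a factor measured in the $X_\gamma$-size of the argument, so the decomposition and the fixed-point ball must be arranged so that both $|\zeta|$ and $\|w\|_{X_\gamma}$ stay uniformly small along the iteration; this in turn requires the inversion $[P^c\psi'(\zeta)]^{-1}$ and the bounds $|\psi(\zeta)|_1\le C|\zeta|$ to be available uniformly, and is where the semi-simplicity hypothesis (iii), providing the clean splitting $X_0=N(A)\oplus R(A)$ compatible with $A$, is essential.
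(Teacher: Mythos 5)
Your overall strategy---spectral splitting via (iii)--(iv), parametrizing $\cE$ near $u_*$ by a finite-dimensional variable, reducing to a coupled system whose equilibria are exactly $\{w=0\}$, and exploiting (A4)/(A5) to get exponential decay of the transverse part and integrability of the tangential velocity---is essentially the paper's (which works with $x=P_cv$ and $y=P_sv-\phi(P_cv)$ in place of your $(\zeta,w)$). However, there are two concrete problems. First, your displayed ``triangular system'' is not what the substitution $v=\psi(\zeta)+w$ actually produces: applying $P^s$ to $\psi'(\zeta)\dot\zeta+\dot w+Aw=H(\zeta,w)$ leaves the term $P^s\psi'(\zeta)\dot\zeta$, which vanishes only at $\zeta=0$ (where the range of $\psi'(0)$ equals $N(A)$ by (ii)). The correct transverse equation is $\dot w+A_sw=P^sH-P^s\psi'(\zeta)[P^c\psi'(\zeta)]^{-1}P^cH$, the exact analogue of the correction term $-\phi'(x)T(x,y)$ appearing in the paper's function $R(x,y)$. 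The omission is repairable, since $|P^s\psi'(\zeta)|$ is small for small $\zeta$ by continuity of $\psi'$ and the extra term obeys the same bound $C(\eta+r)|w|_1$, but as written the system is wrong.

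Second, and more seriously, the global contraction on $\Sigma_r$ cannot be closed with (A5) alone. (A5) gives $|G(z_1)-G(z_2)|_0\le C_1(\eta+|z_2|_1)|z_1-z_2|_1$; for the self-map estimate you correctly take $z_2=\psi(\zeta)$, an equilibrium that is small in $X_1$. But the contraction estimate requires bounding $H(\zeta_1,w_1)-H(\zeta_2,w_2)$ where both arguments carry a nontrivial $w$-component, and then the factor $|z_2(t)|_1$ involves $|w_2(t)|_1$, which is \emph{not} uniformly small: membership in $\EE_1(\R_+)$ controls $\sup_t|w(t)|_\gamma$, not $\sup_t|w(t)|_1$, and the resulting product $|w_2(t)|_1\,|w_1(t)-w_2(t)|_1$ is not controlled in $\EE_0$ (in the $L_p$ setting it lands in $L_{p/2}$). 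The paper sidesteps this entirely: it takes the solution furnished by (A6) on its maximal interval, introduces the exit time $t_1$ from the ball $\bar B_{X_\gamma}(0,\rho)$, and proves a priori weighted estimates on $[0,t_1)$ using (A5) only with $z_2=x+\phi(x)$ an equilibrium; the resulting bound $|v(t)|_\gamma\le M_4|v_0|_\gamma$ contradicts $t_1<t_*$ and then (A6) yields $t_*=\infty$. If you keep the fixed-point route you must either strengthen (A5) to a genuine two-sided local Lipschitz estimate or switch to the a priori-estimate/continuation scheme; and you would still have to identify the constructed fixed point with the (A6) solution rather than appealing to ``uniqueness of the decomposition''.
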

\begin{proof}
The proof to Theorem 2.1 will be carried out in several steps, as follows.
\medskip\\
(a) We denote by $P_l$, $l\in\{c,s\}$, the spectral projections corresponding
to the spectral sets $\sigma_s$ and $\sigma_c:=\{0\}$, respectively, and let
$A_l=P_l A P_l$ be the part of $A$ in 
$X_0^l=P_l(X_0)$ for $l\in\{c,s\}$.
Note that $A_c=0$.
We set $X_j^l:=P_l(X_j)$ for $l\in\{c,s\}$ and $j\in\{0,\gamma,1\}$.
It follows from our assumptions that
$X^c_0=X^c_1$.
In the following we set $X^c:=X^c_0$ and equip
$X^c$ with the norm of $X_0$.
Moreover, we take as a norm on $X_j$
\begin{equation}
\label{norm-decomposition}
|v|_j:=|P_c v|_0 + |P_s v|_j\quad\text{for}\quad j=0,\gamma,1.
\end{equation}
(b) Next we show that
 the manifold $\cE$ can be represented
as the (translated) graph of a function 
$\phi:B_{X^c}(0,\rho_0)\to X_1^s$ in a neighborhood
of $u_\ast$.
In order to see this we consider the mapping
\begin{equation*}
g:U\subset \R^m\to X^c,\quad
g(\zeta):=P_c\psi(\zeta),\quad \zeta\in U.
\end{equation*}
It follows from our assumptions that
$g^\prime(0)=P_c\psi^\prime(0):\R^m\to X^c$
is an isomorphism.
By the inverse function theorem, $g$ is a
$C^1$-diffeomorphism of a neighborhood of $0$ in $\R^m$
onto a neighborhood, say $B_{X^c}(0,\rho_0)$, of $0$ in $X^c$.
Let $g^{-1}:B_{X^c}(0,\rho_0)\to U$ be the inverse mapping.
Then $g^{-1}:B_{X^c}(0,\rho_0)\to U$ is $C^1$ and $g^{-1}(0)=0$.
Next we set
$\Phi(x):=\psi(g^{-1}(x))$ for $x\in B_{X^c}(0,\rho_0)$
and we note that
\begin{equation*}
\Phi\in C^1(B_{X^c}(0,\rho_0),X_1^s), \quad
\Phi(0)=0,
\quad \{u_\ast +\Phi(x) \st x\in B_{X^c}(0,\rho_0)\}=\cE\cap W,
\end{equation*}
where $W$ is an appropriate neighborhood of $u_\ast$ in $X_1$.
Clearly,
\begin{equation*}
P_c \Phi(x)=((P_c\circ \psi)\circ g^{-1})(x)=
(g\circ g^{-1})(x)=x,\quad x\in  B_{X^c}(0,\rho_0),
\end{equation*}
and this yields
$\Phi(x)=P_c\Phi(x)+P_s\Phi(x)=x+P_s\Phi(x)$ for
$x\in B_{X^c}(0,\rho_0)$.
Setting $\phi(x):=P_s\Phi(x)$ we conclude that
\begin{equation}
\label{phi}
\phi\in C^1(B_{X^c}(0,\rho_0),X_1^s),\quad \phi(0)=\phi^\prime (0)=0,
\end{equation}
and that
$
\{u_\ast +x+\phi(x) \st x\in B_{X^c}(0,\rho_0)\}=\cE\cap W,
$
where $W$ is a neighborhood of $u_\ast$ in $X_1$.
This shows that the manifold $\cE$
can be represented
as the (translated) graph of the function $\phi$ in a neighborhood
of $u_\ast$. Moreover,
the tangent space of $\cE$ at $u_\ast$
coincides with $N(A)=X^c$.
By applying the projections $P_l$, $l\in\{c,s\}$, to equation \eqref{equilibrium-psi}
and using that $x+\phi(x)=\psi(g^{-1}(x))$
for $x\in B_{X^c}(0,\rho_0)$, and that $A_c\equiv 0$,
we obtain the following equivalent system of equations
for the equilibria of \eqref{FN2}
\begin{equation}
\label{equilibria-phi}
P_cG(x+\phi(x))=0,\quad
P_s G(x+\phi(x))=A_s\phi(x),
\quad x\in B_{X_c}(0,\rho_0).
\end{equation}
Finally, let us also agree that $\rho_0$ has already been chosen small enough
so that
\begin{equation}
\label{estimate-phi}
|\phi^\prime(x)|_{\cB(X^c,X_1^s)}\le 1 ,\quad
|\phi(x)|_1\le  |x|,\quad x\in B_{X^c}(0,\rho_0).
\end{equation}
This can always be achieved, thanks to \eqref{phi}.
\medskip\\
\noindent
(c) 
Introducing the new variables
\begin{equation*}
\begin{aligned}
&x=P_c v=P_c (u-u_*), \\
&y=P_sv-\phi(P_cv)=P_s(u-u_*)-\phi(P_c (u-u_*))
\end{aligned}
\end{equation*}
we then
obtain the following system of evolution equations
in $X^c\times X^s_0 $
\begin{equation}
\label{system}
\left\{
\begin{aligned}
\dot{x}=T(x,y),      \quad &x(0)=x_0, \\
\dot{y}+A_sy=R(x,y), \quad &y(0)=y_0,\\
\end{aligned}
\right.
\end{equation}
with $x_0=P_cv_0$ and $y_0=P_sv_0-\phi(P_cv_0)$,
where the functions $T$ and $R$ are given by
\begin{equation*}
\begin{aligned}
&T(x,y)=P_c G(x+\phi(x)+y), \\
&R(x,y)=P_sG(x+\phi(x)+y)-A_s\phi(x)-\phi^\prime(x)T(x,y).
\end{aligned}
\end{equation*}
Using the equilibrium equations \eqref{equilibria-phi}, the
expressions for $R$ and $T$ can be rewritten as
\begin{equation}
\label{R-T}
\begin{aligned}
&T(x,y)=P_c \big(G(x+\phi(x)+y)-G(x+\phi(x))\big), \\
&R(x,y)=P_s \big(G(x+\phi(x)+y)-G(x+\phi(x))\big)-\phi^\prime(x)T(x,y).
\end{aligned}
\end{equation}
Equation \eqref{R-T}
immediately yields
\begin{equation*}
\label{R=T=0}
T(x,0)=R(x,0)=0\quad\text{for all }\ x\in B_{X^c}(0,\rho_0),
\end{equation*}
showing that
the equilibrium set $\cE$ of \eqref{FN1}
near $u_*$ has been reduced to the set
$ B_{X^c}(0,\rho_0)\times \{0\}\subset X^c\times X^s_1$.
\medskip\\
Observe also that there is a unique correspondence between the solutions of \eqref{FN1}
close  to $u_*$ in $X_\gamma$ and those of (\ref{system}) close to $0$.
We call  system \eqref{system} the {\em normal form}
of \eqref{FN1} near
its normally stable equilibrium $u_*$.
\medskip\\
(d)
Taking $z_1=x+\phi(x)+y$ and $z_2=x+\phi(x)$
it follows from (A5), \eqref{estimate-phi} and \eqref{R-T} that
\begin{equation}
\label{estimate-R-T}
\begin{aligned}
|T(x,y)|,\ |R(x,y)|_0 \le C_1\big(\eta +|x+\phi(x)|_1\big)|y|_1
\le \beta |y|_1,
\end{aligned}
\end{equation}
with $\beta:=C_2(\eta+r)$,
where the constants $C_1$ and $C_2$ are independent of
$\eta,r$ and $x,y$, provided that $x\in \bar B_{X^c}(0,\rho)$,
$y\in \bar B_{X^s_\gamma}(0,\rho)\cap X_1$
and $\rho\in (0,r/3]$ with $r<3\rho_0$.
Suppose that $\eta$ and, accordingly, $r$ were already chosen
small enough so that
\begin{equation}
\label{beta}
M_0\beta= M_0C_2(\eta +r)\le 1/2.
\end{equation}
(e)
Suppose now that $v_0\in B_{X_\gamma}(0,\delta)$, where
$\delta<r_2$ will be chosen later. By (A6), problem
(\ref{FN2}) has a unique solution on some maximal interval of
existence $[0,t_*)$.
Let $\eta$ and $r$ be fixed so that \eqref{beta} holds
and set $\rho=r/3$. Let then $t_1$ be the exit time for the ball
$\bar B_{X_\gamma}(0,\rho)$, that is
\[
t_1:=\sup\{t\in(0,t_*):|v(\tau)|_\gamma\le \rho,\,\tau\in[0,t]\}.
\]
Suppose $t_1<t_*$ and set $J_1=[0,t_1)$.
The definition of $t_1$  implies that $|x(t)|\le \rho $ for all $t\in J_1$.
Assuming wlog that the embedding constant of $X_1\hookrightarrow X_\gamma$
is less or equal to one, we obtain from \eqref{norm-decomposition}
\begin{equation*}
\begin{split}
\rho\ge |v(t)|_\gamma
&=|x(t)+\phi(x(t))+y(t)|_\gamma 
    =|x(t)|+|\phi(x(t))+y(t)|_\gamma \\
&\ge |x(t)|+|y(t)|_\gamma-|\phi(x(t))|_\gamma 
\ge |y(t)|_\gamma
\end{split}
\end{equation*}
for $t\in J_1$, since $\phi(x)$ is non-expansive for $|x|\le \rho_0$.
In conclusion we have shown that
$|x(t)|,|y(t)|\le \rho$ for all $t\in J_1$,
so that the estimate \eqref{estimate-R-T}
holds for $(x(t),y(t))$, $t\in J_1$.
Then, by (A4) and \eqref{estimate-R-T}, 
we have for $\sigma\in[0,\omega]$
\begin{equation*}
\begin{split}
\no e^{\sigma t}y\no_{\EE_1(J_1)}
&\le \no e^{\sigma t}e^{-A_s t}y_0\no_{\EE_1(J_1)}
+M_0\no e^{\sigma t}R(x,y)\no_{\EE_0(J_1)} \\
&
\le M_1 |y_0|_\gamma+M_0\beta\no e^{\sigma t}y\no_{\EE_1(J_1)},
\end{split}
\end{equation*}
which implies
\begin{equation}
 \label{FN4}
 \no e^{\sigma t}y\no_{\EE_1(J_1)}\le 2M_1|y_0|_\gamma,\quad
 \sigma\in[0,\omega],
\end{equation}
thanks to \eqref{beta}.
Using (A1), (A4) and (\ref{FN4}) we
then have for $t\in J_1$,
\begin{equation*}
\begin{split}
|e^{\omega t}y(t)|_\gamma
&\le |e^{\omega t}y(t)-e^{\omega t}e^{-A_s t}y_0|_\gamma
+|e^{\omega t}e^{-A_s t}y_0|_\gamma\\
&\le c_0\no e^{\omega t}y-e^{\omega t}  
e^{-A_st}y_0\no_{\EE_1(J_1)}+M_1|y_0|_\gamma\\
& \le (3c_0M_1+M_1)|y_0|_\gamma,
\end{split}
\end{equation*}
which yields with $M_2=3c_0 M_1+M_1$,
\[
|y(t)|_\gamma\le M_2e^{-\omega t}|y_0|_\gamma,\quad
t\in J_1.
\]
Using (\ref{FNR}) we deduce further from the equation for $x$ and
the estimate for $T$ in \eqref{estimate-R-T},
and from \eqref{beta}--\eqref{FN4} that
\begin{equation*}
\begin{split}
|x(t)|\,& \le |x_0|+\int_0^t|T(x(s),y(s))|\,ds
 \le |x_0|+\beta \int_0^t |y(s)|_1\,ds \\ 
& \le |x_0|+\beta c_1 \no e^{\omega t}y\no_{\EE_1(J_1)}
\le |x_0|+M_3|y_0|_\gamma,\quad t\in J_1,
\end{split}
\end{equation*}
where $M_3=M_1 c_1/M_0$. Since $v(t)=x(t)+\phi(x(t))+y(t)$, 
the previous estimates and \eqref{estimate-phi} imply
that for some constant $M_4\ge 1$,
\begin{equation*}
|v(t)|_\gamma\le M_4|v_0|_\gamma,
\quad t\in J_1.
\end{equation*}
Choosing $\delta=\min\{\rho,r_2\}/(2M_4)$, we have
$|v(t_1)|_\gamma\le \min\{\rho,r_2\}/2$, a contradiction
to the definition of $t_1$, and hence
$t_1=t_*$. The above argument then yields uniform bounds
$\no v\no_{\EE_1(J)}\le C$ and $\sup_{t\in J}|v(t)|_\gamma\le
r_2/2$ for all $J=[0,a)$ with $a<t_*$. In view of (A6), it follows
that $t_*=\infty$. 
\medskip\\
(f)
Repeating the above estimates on the interval $[0,\infty)$ we obtain
\begin{equation}
\label{y-gamma-infty} |x(t)|\le |x_0|+M_3|y_0|_\gamma,\quad
|y(t)|_\gamma\le M_2 e^{-\omega t} |y_0|_\gamma, \quad t\in
[0,\infty),
\end{equation}
for $v_0\in B_{X_\gamma}(0,\delta)$.
Moreover, 
$
\lim_{t\ra\infty} x(t)= x_0 +\int_0^\infty T(x(s),y(s))ds=:x_\infty
$
exists since the integral is absolutely convergent.
This yields existence of
\begin{equation*}
v_\infty:=\lim_{t\ra\infty} v(t)=\lim_{t\ra\infty} x(t)+\phi(x(t))+y(t)=x_\infty+\phi(x_\infty).
\end{equation*}
Clearly, $v_\infty$ is an equilibrium
for equation \eqref{FN2}, and 
$u_\infty:=u_\ast+ v_\infty\in\cE$ is an equilibrium for \eqref{FN1}.
It follows from (A2), the estimate for $T$ in \eqref{estimate-R-T},
and from \eqref{FN4} that
\begin{equation*}
\begin{aligned}
|x(t)-x_\infty|=\Big|\int_t^\infty T(x(s),y(s))\,ds\Big|  
&\le \beta \int_t^\infty |y(s)|_1\,ds \\
&\le \beta c_1 e^{-\omega t}\no e^{\omega t}y\no_{\EE_1(\R_+)} 
\le M_4  e^{-\omega t} |y_0|_\gamma. 
\end{aligned}
\end{equation*}
This shows that $x(t)$ converges to $x_\infty$
at an exponential rate.
Due to \eqref {estimate-phi}, \eqref{y-gamma-infty}
and the exponential estimate for $|x(t)-x_\infty|$
we now get for the solution $u(t)=u_\ast+v(t)$ of \eqref{FN1}
\begin{equation}
\label{exponential-v}
\begin{aligned}
|u(t)-u_\infty|_\gamma
&=|x(t)+\phi(x(t))+y(t)-v_\infty|_\gamma \\
&\le |x(t)-x_\infty|_\gamma +|\phi(x(t))-\phi(x_\infty)|_\gamma +|y(t)|_\gamma \\
&\le (2M_4+M_2)e^{-\omega t}|y_0|_\gamma\\
&\le Me^{-\omega t}|P_sv_0-\phi(P_cv_0)|_\gamma\,,
\end{aligned}
\end{equation}
thereby completing the proof of the second part of
Theorem~\ref{th:1}. Concerning stability,
note that given $r>0$ small enough we may choose
$0<\delta\le r$ such that the solution starting in
$B_{X_\gamma}(u_*,\delta)$ exists on $\R_+$ and stays within $B_{X_\gamma}(u_*,r)$.
\end{proof}
\noindent
\textbf{Remarks:}
(a) Theorem~\ref{th:1} has been proved in \cite{PSZ08}
in the setting of $L_p$-maximal regularity,
and applications to quasilinear parabolic
problems with nonlinear boundary conditions, to the
Mullins-Sekerka problem, and to the stability of
travelling waves for a quasilinear parabolic equation
have been given.
\smallskip\\
(b)
It has been shown in \cite{PSZ08} by means of examples
that conditions (i)--(iii) in Theorem~\ref{th:1}
are also necessary in order to get convergence of solutions towards
equilibria $u_\infty\in\cE$.


\begin{thebibliography}{99}

\bibitem{Ama95} 
	\newblock H. Amann, 
	\newblock {``Linear and Quasilinear Parabolic Problems.
          Vol. I. Abstract Linear Theory,"}
	\newblock  Monographs in Mathematics 89, 
	\newblock Birkh\"auser, Boston (MA), 1995.

\bibitem{Am05} 
	\newblock H. Amann,
	\newblock \emph{Quasilinear parabolic problems via maximal regularity,}
	\newblock Adv. Differential Equations, \textbf{10}  (2005), 1081--1110.

\bibitem{Ang90}
	\newblock S. Angenent, 
	\newblock \emph{Nonlinear analytic semiflows,} 
	\newblock Proc. Roy. Soc.Edinburgh Sect. A, \textbf{115} (1990), 91--107.

\bibitem{Aul84} 
	\newblock B. Aulbach, 
	\newblock ``Continuous and discrete dynamics near manifolds of equilibria," 
	\newblock Lecture Notes in Math. {1058},
	\newblock Springer-Verlag, Berlin, 1984.

\bibitem{BaJo89} 
	\newblock P.~Bates and C.~Jones,
	\newblock \emph{Invariant manifolds for semilinear partial differential equations,}
	\newblock in ``Dynamics Reported," Vol. 2, 
	\newblock Wiley, Chichester, (1989), 1--38.

\bibitem{BJL00}
	\newblock C.-M. Brauner, J. Hulshof and A. Lunardi,  
	\newblock \emph{A general approach to stability in free boundary problems,}
	\newblock J.\ Differential Equations, \textbf{164} (2000),  16--48.


\bibitem{Cui07} 
	\newblock S. Cui,
	\newblock \emph{Lie group action and stability analysis of stationary solutions
	for a free boundary problem modeling tumor growth,}
	\newblock preprint, arXiv:0712.2483v1.

\bibitem{ClSi01} 
	\newblock Ph. Cl{\'{e}}ment and G. Simonett, 
	\newblock \emph{Maximal regularity in continuous interpolation spaces and quasilinear parabolic equations,} 
	\newblock J. Evol. Equ., \textbf{1} (2001), 39--67.

\bibitem{DPrGr79} 
	\newblock G. Da Prato and P. Grisvard,
	\newblock\emph{Equations d'\'evolution abstraites non linÚaires de type parabolique}, 
	\newblock Ann. Mat. Pura Appl., \textbf{120} (1979), 329--396.

\bibitem{DPLu88} 
	\newblock G. Da Prato and A. Lunardi,   
	\newblock\emph{Stability, instability and center manifold theorem for fully nonlinear
	autonomous parabolic equations in Banach space},  
	\newblock Arch. Rational Mech.\ Anal., \textbf{101} (1988), 115--141.

\bibitem{DHP03} 
	\newblock R.~Denk, M.~Hieber and J.~Pr{\"u}ss,
	\newblock ``${\mathcal R}$-boundedness and problems of elliptic and parabolic type,"
	\newblock  Mem. Amer. Math. Soc.  166  (2003),  no. 788.

\bibitem{EMS98}
	\newblock J. Escher, U.F. Mayer and G. Simonett,
	\newblock\emph{The surface diffusion flow for immersed hypersurfaces},
	\newblock  SIAM J. Math. Anal., \textbf{29}  (1998), 1419--1433.

\bibitem{ES98} 
	\newblock J. Escher and G. Simonett,
	\newblock\emph{A center manifold analysis for the Mullins-Sekerka model},
	\newblock J.~Differential Equations, \textbf{143} (1998),  267--292.
	
\bibitem{ES99}
	\newblock J. Escher and	G. Simonett,
	\newblock\emph{Moving surfaces and abstract parabolic evolution equations},
	 in ``Topics in nonlinear analysis,"
	 \newblock Progr. Nonlinear Differential Equations Appl., 35,
	 \newblock Birkh\"auser, Basel, (1999), 183--212.

\bibitem{FR02} 
	\newblock A. Friedman and F. Reitich, 
	\newblock\emph{Quasi-static motion of a capillary drop. II. The three-dimensional case},
	\newblock  J. Differential Equations, \textbf{186}  (2002), 509--557.

\bibitem{GP97} 
	\newblock M. G\"unther and G. Prokert, 
	\newblock\emph{Existence results for the quasistationary motion of a free capillary  liquid  drop}, 
	\newblock Z. Anal. Anwendungen,  \textbf{16}  (1997),  311--348.

\bibitem{Hen81} 
	\newblock D. Henry, 
	\newblock ``Geometric theory of semilinear parabolic equations," 
	\newblock Lecture Notes in Math. {840}, 
	\newblock Springer-Verlag, Berlin, 1981.

\bibitem{KW04}
	\newblock P.C. Kunstmann and L. Weis, 
	\newblock\emph{Maximal $L\sb p$-regularity for parabolic equations,
	 Fourier multiplier theorems and $H\sp \infty$-functional calculus}, 
	\newblock in ``Functional analytic methods for evolution equations,"
	\newblock  Lecture Notes in Math., 1855, 
	\newblock Springer-Verlag, Berlin, (2004), 65--311.


\bibitem{LPS08} 
	\newblock Y. Latushkin, J. Pr\"uss, and R. Schnaubelt,
	\newblock\emph{Center manifolds and dynamics near equilibria for 
	quasilinear parabolic systems with fully nonlinear boundary conditions},
	\newblock Discrete Contin. Dyn. Syst. Ser. B, \textbf{9} (2008), 595--633.

\bibitem{Lun95}
	\newblock A. Lunardi,
	\newblock ``Analytic Semigroups and Optimal Regularity in Parabolic 
	Problems,"
	\newblock Progress in Nonlinear Differential Equations and their 
	Applications {16},
	\newblock Birkh\"auser, Basel, 1995.

\bibitem{Mie91} 
	\newblock A. Mielke,
	\newblock\emph{Locally invariant manifolds for quasilinear parabolic 
	equations},
	\newblock Rocky Mountain J.\ Math., \textbf{21} (1991),  707--714.


\bibitem{Pru03} 
	\newblock J.~Pr\"uss,
	\newblock\emph{Maximal regularity for evolution equations in $L_p$-spaces},
	\newblock Conf. Semin. Mat. Univ. Bari, \textbf{285} (2003), 1--39.

\bibitem{PSS07} 
	\newblock J.~Pr\"uss, J.~Saal, and G. Simonett,
	\newblock\emph{Existence of analytic solutions for the classical 
	Stefan problem}, 
	\newblock Math. Ann., \textbf{338}  (2007), 703--755.

\bibitem{PrSi07}
	\newblock J. Pr\"uss and G. Simonett,
	\newblock\emph{$H\sp \infty$-calculus for the sum of non-commuting 
	operators},
	\newblock Trans. Amer. Math. Soc. \textbf{359}  (2007), 3549--3565. 


\bibitem{PrSi06}
	\newblock J. Pr\"uss and G. Simonett, 
	\newblock\emph{Stability of equilibria for the Stefan
	problem with surface tension}, 	
	\newblock SIAM J. Math. Anal., to appear.

\bibitem{PSZ08}
	\newblock J.~Pr\"uss, G. Simonett, and R. Zacher,
	\newblock\emph{On convergence of solutions to equilibria for
	quasilinear parabolic problems},
	\newblock preprint.

\bibitem{Sim95}
	\newblock  G. Simonett, 
	\newblock\emph{Center manifolds for quasilinear reaction-diffusion systems},
	\newblock Differential Integral Equations, \textbf{8} (1995), 753--796.

\bibitem{VI92} 
	\newblock A. Vanderbauwhede and G. Iooss,
	\newblock\emph{Center manifold theory in infinite dimensions},
	\newblock in ``Dynam. Report. Expositions Dynam. Systems," 
	Vol. {1},
	\newblock Springer-Verlag, Berlin, (1992), 125--163.

\end{thebibliography}
\end{document}